\numberwithin{equation}{section}
\title{Multitime hybrid differential games with \\ curvilinear integral functional}
\author{Constantin Udri\c ste, Elena-Laura Otob\^{i}cu, Ionel \c Tevy}
\begin{document}
\date{}
\renewcommand{\abstractname}{}

\maketitle
\numberwithin{equation}{section}
\newtheorem{theorem}{Theorem}[section]
\newtheorem{lemma}[theorem]{Lemma}
\newtheorem{corollary}[theorem]{Corollary}
\newtheorem{definition}[theorem]{Definition}
\newtheorem{remark}[theorem]{Remark}

\begin{abstract}
Multitime differential games are related to the modeling and analysis
of cooperation or conflict in the context of a multitime dynamical systems. Their theory
involves either a curvilinear integral functional or a multiple integral functional and
an $m$-flow as constraint.
The aim of this paper is to give original results regarding multitime hybrid differential games with
curvilinear integral functional constrained by an $m$-flow: fundamental properties of multitime upper and lower values,
viscosity solutions of multitime (HJIU) PDEs, representation formula of viscosity
solutions for multitime (HJ) PDEs, and max-min representations.
\end{abstract}

\noindent {\bf Mathematics Subject Classification 2010}: 49L20, 91A23, 49L25, 35F21.

\noindent{\bf Key words}: multitime hybrid differential games, curvilinear integral cost, multitime dynamic programming, multitime viscosity solutions.

\section{Multitime hybrid differential game with \\curvilinear integral functional}

Let $t=(t^\alpha)\in \Omega_{0T}\subset \mathbb{R}^m_+$, $\alpha =1,...,m,$ be an evolution multi-parameter, called multitime. Consider an arbitrary $C^1$ curve $\Gamma_{0T}$ joining the diagonal opposite points $0=(0,\ldots,0)$ and $T=(T^1,\ldots,T^m)$ in the $m$-dimensional parallelepiped $\Omega_{0T}=[0,T]$ (multitime interval) in $\mathbb{R}^m_+$ endowed with the product order, a $C^2$ state vector $x:\Omega_{0T}\rightarrow \mathbb{R}^n, x(t)=(x^i(t)),$ $i=1,...,n$, a $C^1$ control vector $u(t)=(u_\alpha(t)):\Omega_{0T}\rightarrow U\subset \mathbb{R}^{qm},$ for the first equip of $m$ players (who wants to maximize), a $C^1$ control vector  $v(t)=(v_\alpha(t)):\Omega_{0T}\rightarrow V\subset \mathbb{R}^{qm},$ for the second equip of $m$ players (who wants to minimize), $u_\alpha(\cdot)=\Phi(\cdot,\eta_1(\cdot)), v_\alpha(\cdot)=\Psi(\cdot,\eta_2(\cdot)),$ a running cost $L_\alpha(t,x(t),u_\alpha(t),v_\alpha(t))dt^\alpha$ as a nonautonomous closed Lagrangian $1$-form (satisfies $D_\beta L_\alpha=D_\alpha L_\beta),$ a terminal cost $g(x(T))$ and the $C^1$ vector fields $X_\alpha=(X_\alpha^i)$ satisfying the complete integrability conditions (CIC)
 $D_{\beta}X_{\alpha}=D_{\alpha}X_{\beta}$ (m-flow type problem).

In our paper, a {\it multitime hybrid differential game} is given by a multitime dynamics,
as a PDE system controlled by two controllers (first equip, second equip)
and a target including a curvilinear integral functional. The approach we follow below is those in the paper
\cite{[2]}, but we must be more creative since our theory is multitemporal one (see also \cite{[8]}-\cite{[21]}).
More precisely, we introduce and analyze a multitime differential game whose Bolza payoff is the sum between a
path independent curvilinear integral (mechanical work) and a function of the final event (the terminal cost, penalty term),
and whose evolution PDE is an m-flow: {\it Find
$$\min_{v(\cdot)\in V}\max_{u(\cdot)\in U} J(u(\cdot),v(\cdot))=\int_{\Gamma_{0T}} L_\alpha (s,x(s),u_\alpha(s),v_\alpha(s))ds^\alpha+g(x(T)),$$
subject to the Cauchy problem
$$\frac{\partial x^i}{\partial s^\alpha}(s)=X^i_\alpha(s,x(s),u_\alpha(s),v_\alpha(s)),$$
$$x(0)=x_0, \,\,s\in \Omega_{0T}\subset \mathbb{R}_+^m, \,\,x\in \mathbb{R}^n.$$}

Let $D_{\alpha}$ be the total derivative operator and $[X_{\alpha},X_{\beta}]$ be the
bracket of vector fields. Suppose the piecewise complete integrability conditions (CIC)
$$ \left( \frac{\partial X_{\alpha}}{\partial u^a_\lambda}\delta^{\gamma}_{\beta} -  \frac{\partial X_{\beta}}{\partial u^a_\lambda}\delta^{\gamma}_{\alpha}\right)\frac{\partial u^a_\lambda}{\partial s^{\gamma}}+\left( \frac{\partial X_{\alpha}}{\partial v^b_\lambda}\delta^{\gamma}_{\beta} -  \frac{\partial X_{\beta}}{\partial v^b_\lambda}\delta^{\gamma}_{\alpha}\right)\frac{\partial v^b_\lambda}{\partial s^{\gamma}}=\left[ X_{\alpha},X_{\beta}\right] + \frac{\partial X_{\beta}}{\partial s^{\alpha}} - \frac{\partial X_{\alpha}}{\partial s^{\beta}},$$
where $a, b =1,...,q$, are satisfied throughout.

To simplify, suppose that the curve $\Gamma_{0T}$ is an increasing curve in the multitime interval $\Omega_{0T}$.
If we vary the starting multitime and the initial point, then we obtain a
larger family of similar multitime problems containing the functional
$$J_{x,t}(u(\cdot),v(\cdot))=\int_{\Gamma_{tT}} L_\alpha (s,x(s),u_\alpha(s),v_\alpha(s))ds^\alpha+g(x(T)),$$
and the evolution constraint
$$\frac{\partial x^i}{\partial s^\alpha}(s)=X^i_\alpha(s,x(s),u_\alpha(s),v_\alpha(s)),$$
$$x(t)=x, \,\,s\in \Omega_{tT}\subset \mathbb{R}_+^m,\,\, x\in \mathbb{R}^n.$$

We assume that each vector field
$X_\alpha:\Omega_{0T}\times \mathbb{R}^n\times U\times V\rightarrow \mathbb{R}^n$
is uniformly continuous, satisfying
$$\left\{\begin{array}{ll}
\Vert X_\alpha(t,x,u_\alpha,v_\alpha)\Vert\leqslant A_\alpha\\
\Vert X_\alpha(t,x,u_\alpha,v_\alpha) - X_\alpha(t,\hat{x},u_\alpha,v_\alpha)\Vert\leqslant A_\alpha\Vert x-\hat{x}\Vert,
\end{array}\right.$$
for some constant 1-form $A=(A_\alpha)$ and all $t\in \Omega_{0T}, \,\,x, \hat{x}\in \mathbb{R}^n, u\in U,v\in V.$

 Suppose the functions
$$g:\mathbb{R}^n\rightarrow \mathbb{R}, \quad L_\alpha:\Omega_{0T}\times \mathbb{R}^n\times U\times V\rightarrow \mathbb{R}$$
are uniformly continuous and satisfy the boundedness conditions
$$\left\{\begin{array}{ll}
\vert g(x)\vert\leqslant B\\
\vert g(x)-g(\hat{x})\vert\leqslant B\Vert x-\hat{x}\Vert,
\end{array}\right.$$

$$\left\{\begin{array}{ll}
\vert L_\alpha(t,x,u_\alpha,v_\alpha)\vert\leqslant C_\alpha\\
\vert L_\alpha(t,x,u_\alpha,v_\alpha)-L_\alpha(t,\hat{x},u_\alpha,v_\alpha)\vert\leqslant C_\alpha\Vert x-\hat{x}\Vert,
\end{array}\right.$$
for constant $1$-form $C=(C_\alpha)$ and all $t\in \Omega_{0T},\,\, x, \hat{x}\in \mathbb{R}^n,\,\, u\in U,\,\,v\in V.$

\begin{definition}
(i) The set $$\mathcal {U}(t)=\left\lbrace u_\alpha(\cdot):\mathbb{R}^m_+\rightarrow U \vert \ u_\alpha(\cdot) \mathrm{ \ is \ measurable \ and \ satisfies \ CIC}\right\rbrace $$ is called \textbf{the control set for the first equip of players}.
(ii) The set $$\mathcal {V}(t)=\left\lbrace v_\alpha(\cdot):\mathbb{R}^m_+\rightarrow V \vert \ v_\alpha(\cdot) \mathrm{ \ is \ measurable \ and \ satisfies \ CIC}\right\rbrace $$ is called \textbf{the control set for the second equip of players}.
\end{definition}

\begin{definition}
(i) A map $\Phi:\mathcal {V}(t)\rightarrow \mathcal {U}(t)$ is called \textbf{a strategy for the first equip of players}, if the equality $v(\tau)=\widehat{v}(\tau), t\leq \tau \leq s \leq T$ implies $\Phi[v](\tau)=\Phi[\widehat{v}](\tau).$
(ii) A map $\Psi:\mathcal {U}(t)\rightarrow \mathcal {V}(t)$ is called \textbf{a strategy for the second equip of players}, if the equality $u(\tau)=\widehat{u}(\tau), t\leq \tau \leq s \leq T$ implies $\Psi[u](\tau)=\Psi[\widehat{u}](\tau).$
\end{definition}

Let $ \mathcal{A}(t)$ be \textbf{\ the set of strategies for the first equip of players}
 and $\mathcal{B}(t)$ be \textbf{\ the set of strategies for the second equip of players}.

\begin{definition}
 (i) The function
$$m(t,x)=\min_{\Psi\in \mathcal{V}} \max_{u(\cdot)\in U} J_{t,x}( u(\cdot),\Psi[u](\cdot))$$ is called \textbf{the multitime lower value function}.
(ii) The function
$$M(t,x)=\max_{\Phi\in \mathcal{U}} \min_{v(\cdot)\in V} J_{t,x}(\Phi[v](\cdot),v(\cdot)) $$ is called \textbf{the multitime upper value function}.
\end{definition}

The multitime lower value function $m(t,x)$ and the multitime upper value function $M(t,x)$
are piecewise continuously differentiable (see below, the boundedness and continuity of the values functions).

\section{Properties of lower and upper values}

\begin{theorem}\textbf{(multitime dynamic programming optimality conditions)}
For each pair of strategies $(\Phi,\Psi),$ the lower and upper value functions
can be written respectively in the form
\begin{equation}\begin{split}
m(t,x)\ & =\min_{\Psi\in \mathcal{B}(t)} \max_{u_\alpha\in \mathcal{U}(t)}\bigg\{  \int_{\Gamma_{tt+h}} L_\alpha (s,x(s),u_\alpha(s),\Psi[u_\alpha](s))ds^\alpha \\& +m(t+h,x(t+h))\bigg\}
 \end{split}\end{equation}
and
\begin{equation}\begin{split}
M(t,x)\ &  =\max_{\Phi\in \mathcal{A}(t)}\min_{v_\alpha\in \mathcal{V}(t)}\bigg\{  \int_{\Gamma_{tt+h}} L_\alpha (s,x(s),\Phi [v_\alpha](s),v_\alpha(s))ds^\alpha \\ & +M(t+h,x(t+h))\bigg\},
\end{split}\end{equation}
for all $(t,x) \in \Omega_{tT}\times \mathrm{R}^n$ and all $h\in \Omega_{0T-t}.$
\end{theorem}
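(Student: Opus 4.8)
The plan is to prove the dynamic programming principle (DPP) for the multitime value functions by the standard two-inequality argument, adapted to the curvilinear-integral setting and the $m$-flow constraint. The key structural feature that makes this work is the \emph{path-independence} of the curvilinear integral: since the Lagrangian $1$-form $L_\alpha\,ds^\alpha$ is closed ($D_\beta L_\alpha = D_\alpha L_\beta$) and the CIC hold for the dynamics, the integral $\int_{\Gamma_{tT}} L_\alpha\,ds^\alpha$ depends only on the endpoints, not on the chosen increasing curve. This lets me legitimately split the integral along $\Gamma_{tT}$ at the intermediate multitime $t+h$ as $\int_{\Gamma_{t,t+h}} + \int_{\Gamma_{t+h,T}}$, which is the algebraic backbone of the whole argument.

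I will argue for the lower value $m(t,x)$ (the upper value $M(t,x)$ being entirely symmetric, exchanging the roles of the two teams and of $\min$/$\max$). Write the right-hand side of the claimed identity as $\widehat{m}(t,x)$. The proof splits into proving $m(t,x)\le \widehat m(t,x)$ and $m(t,x)\ge \widehat m(t,x)$.

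**First I would prove $m(t,x)\le \widehat m(t,x)$.** Fix a strategy $\Psi\in\mathcal B(t)$ and an arbitrary control $u_\alpha(\cdot)$ on $[t,t+h]$; extend it to $[t,T]$ by concatenation with a near-optimal control for the subproblem started at $(t+h,x(t+h))$, where $x(t+h)$ is the state reached by solving the $m$-flow Cauchy problem from $(t,x)$ under $(u,\Psi[u])$. Using additivity of the curvilinear integral over $\Gamma_{t,t+h}$ and $\Gamma_{t+h,T}$, and the fact that the restriction of a strategy to a sub-multitime-interval is again an admissible strategy (this is exactly what the non-anticipativity condition in Definition~2 guarantees), the payoff of the concatenated control is bounded by $\int_{\Gamma_{t,t+h}}L_\alpha\,ds^\alpha + m(t+h,x(t+h)) + \varepsilon$. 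Taking the $\max$ over $u$ on $[t,t+h]$ and then the $\min$ over $\Psi$, and letting $\varepsilon\to 0$, yields the desired inequality.

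**Then I would prove the reverse $m(t,x)\ge \widehat m(t,x)$,** which is the step I expect to be the main obstacle. Here one chooses a near-optimal strategy $\Psi_1$ for the full problem on $[t,T]$ and must manufacture from it a strategy on $[t,t+h]$ together with a coherent family of strategies $\Psi_2$ on the tails $[t+h,T]$ so that their concatenation reproduces $\Psi_1$; the delicate point is that the strategies must remain non-anticipative after splitting, and the optimal tail strategy must be selected measurably in the intermediate data $(t+h,x(t+h))$. This gluing/measurable-selection argument, together with the verification that the concatenated strategy is admissible in the sense of Definition~1 (measurability plus CIC preserved across the junction), is where the real work lies; the closedness of $L_\alpha\,ds^\alpha$ and the CIC on $X_\alpha$ are used once more to guarantee that splicing the curve at $t+h$ does not alter the value of the curvilinear integral. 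Once both inequalities are established for $m$, the identical construction with $\min$ and $\max$ interchanged delivers the formula for $M$, completing the proof.
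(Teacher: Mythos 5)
Your overall skeleton --- introduce the right-hand side as an auxiliary function $w(t,x)$, prove $m\le w$ and $m\ge w$ up to an arbitrary $\varepsilon>0$, and pass information across the intermediate multitime $t+h$ by concatenation --- is exactly the paper's argument, and your sketch of the direction $m\ge w$ (take an $\varepsilon$-optimal strategy for the full problem on $\Omega_{tT}$, manufacture from it a head strategy and tail strategies whose concatenation reproduces it, then combine the two $\varepsilon$-maximizing controls) matches the paper's step ii) faithfully, including your correct identification of non-anticipativity as what makes the splicing admissible.

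The genuine gap is in your \emph{first} inequality, $m(t,x)\le w(t,x)$, which as written would fail. You fix an \emph{arbitrary} strategy $\Psi\in\mathcal{B}(t)$, extend a head control past $t+h$ by a near-optimal tail \emph{control} for the subproblem, and claim the resulting payoff is bounded above by $\int_{\Gamma_{t\,t+h}}L_\alpha\,ds^\alpha+m(t+h,x(t+h))+\varepsilon$. But $m(t+h,\bar{x})$ is a \emph{minimum over tail strategies} of a maximum over tail controls; for a fixed $\Psi$ whose tail behaviour is poor for the minimizing team, the tail payoff under $\Psi$ can exceed $m(t+h,\bar{x})$ by an amount that no choice of $\varepsilon$ controls, and choosing the tail control near-maximizing only yields the \emph{lower} bound $m(t+h,\bar{x})-\varepsilon$, not an upper bound. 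Hence the pointwise-in-$\Psi$ estimate over which you then take $\min_{\Psi}$ is false. The paper's step i) avoids this by building the strategy, not the control: pick $\Upsilon\in\mathcal{B}(t)$ that is $\varepsilon$-optimal for $w(t,x)$, and for \emph{each} possible intermediate state $\bar{x}$ pick $\Upsilon_{\bar{x}}\in\mathcal{B}(t+h)$ that is $\varepsilon$-optimal for $m(t+h,\bar{x})$; glue them into a single strategy $\Psi^{*}$ (equal to $\Upsilon$ on $\Omega_{t\,t+h}$ and to $\Upsilon_{\bar{x}}$, with $\bar{x}=x(t+h)$, on $\Omega_{tT}\setminus\Omega_{t\,t+h}$). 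For this specially constructed $\Psi^{*}$ one has $J_{t,x}(u,\Psi^{*}[u])\le w(t,x)+2\varepsilon$ for \emph{every} control $u$, whence $m(t,x)\le w(t,x)+2\varepsilon$. So in this direction the concatenation must be performed on the minimizer's strategies --- a whole family indexed by the intermediate state --- and this is the idea missing from your plan. (Your concern about selecting $\Upsilon_{\bar{x}}$ measurably in $\bar{x}$ is legitimate, but the paper, following the classical single-time argument, passes over it silently.)
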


\begin{proof} First we recognize the Bellman principle
(we write the value of a decision problem at a certain point in multitime
in terms of the payoff from some initial choices and the value of the remaining
decision problem that results from those initial choices).

To confirm the first statement, we shall use the function
\begin{equation}\begin{split}
w(t,x)\ &=\min_{\Psi\in \mathcal{B}(t)} \max_{u_\alpha\in \mathcal{U}(t)}\bigg\{  \int_{\Gamma_{tt+h}} L_\alpha (s,x(s),u_\alpha(s),\Psi[u_\alpha](s))ds^\alpha \\ & +m(t+h,x(t+h))\bigg\}.
\end{split}\end{equation}

We will show that, for all $\varepsilon >0,$ the lower value function $m(t,x)$
will satisfies two inequalities, $m(t,x)\leq w(t,x)+2\varepsilon$ and
$m(t,x)\geq w(t,x)-3\varepsilon.$ Since $\varepsilon>0$ is arbitrary, it follows $m(t,x)=w(t,x).$

\begin{enumerate}[i)]

\item For $\varepsilon >0,$ there exists a strategy $\Upsilon\in \mathcal{B}(t)$ such that

\begin{equation}\begin{split}\label{eq.7}
w(t,x)\ &\geqslant \max_{u_\alpha\in \mathcal{U}(t)}\bigg\{  \int_{\Gamma_{tt+h}} L_\alpha (s,x(s),u_\alpha(s),\Upsilon[u_\alpha](s))ds^\alpha \\ & +m(t+h,x(t+h))\bigg\}-\varepsilon.
\end{split}\end{equation}

We shall use the state $x(\cdot)$ which solves the (PDE), with the initial condition $\overline{x}=x(t+h)$
(Cauchy problem) on the set $\Omega_{tT}\setminus \Omega_{tt+h},$
for each $\overline{x}\in \mathbb{R}^n$. We can write
\begin{equation}\begin{split}
m(t+h,\overline{x}) \ & =\min_{\Psi\in \mathcal{B}(t+h)} \max_{u_\alpha\in \mathcal{U}(t+h)}\bigg\{ \int_{\Gamma_{t+hT}} L_\alpha (s,x(s),u_\alpha(s),\Psi[u_\alpha](s))ds^\alpha  \\ & +g(x(T))\bigg\}.
\end{split}\end{equation}

Thus there exists a strategy $\Upsilon_{\overline{x}}\in \mathcal{B}(t+h)$ for which

\begin{equation}\begin{split}\label{eq.8}
 m(t+h,\overline{x})\ & \geqslant \max_{u_\alpha\in \mathcal{U}(t+h)}\bigg\{  \int_{\Gamma_{t+hT}} L_\alpha (s,x(s),u_\alpha(s),\Upsilon_{\overline{x}}[u_\alpha](s))ds^\alpha \\& + g(x(T))\bigg\}-\varepsilon.
\end{split}\end{equation}

Define a new strategy
$$\Psi\in\mathcal{B}(t), \Psi[u_\alpha](s)\equiv
\left\{\begin{array}{ll}
\Upsilon[u_\alpha](s) & s\in \Omega_{tt+h}\\
\Upsilon_{\overline{x}}[u_\alpha](s) & s\in \Omega_{tT}\setminus\Omega_{tt+h},
\end{array}\right.$$
for each control $u_\alpha\in \mathcal{U}(t).$ For any $u_\alpha\in \mathcal{U}(t)$,
replacing the inequality $\eqref{eq.8}$ in the inequality $\eqref{eq.7}$, we obtain
$$w(t,x)\geqslant   \int_{\Gamma_{tT}} L_\alpha (s,x(s),u_\alpha(s),\Psi[u_\alpha](s))ds^\alpha+g(x(T))-2\varepsilon.$$
Consequently
$$ \max_{u_\alpha\in \mathcal{U}(t)}\left\lbrace  \int_{\Gamma_{tT}} L_\alpha (s,x(s),u_\alpha(s),\Psi[u_\alpha](s))ds^\alpha+g(x(T))\right\rbrace\leq w(t,x)+2\varepsilon.$$

Hence
$$m(t,x)\leq w(t,x)+2\varepsilon.$$

\item On the other hand, there exists a strategy $\Psi\in\mathcal{B}(t)$ for which we can write the inequality

\begin{equation}\label{eq.9}
m(t,x)\geqslant \max_{u_\alpha\in \mathcal{U}(t)}\left\lbrace  \int_{\Gamma_{tT}} L_\alpha (s,x(s),u_\alpha(s),\Psi[u_\alpha](s))ds^\alpha+g(x(T))\right\rbrace-\varepsilon.
\end{equation}

By the definition of $w(t,x),$ we have
\begin{equation}\begin{split}
w(t,x) \ & \leqslant \max_{u_\alpha\in U(t)}\bigg\{  \int_{\Gamma_{tt+h}} L_\alpha (s,x(s),u_\alpha(s),\Psi[u_\alpha](s))ds^\alpha \\& +m(t+h,x(t+h))\bigg\}
\end{split}\end{equation}
and consequently there exists a control $u^1_\alpha\in \mathcal{U}(t)$ such that
\begin{equation}\begin{split}\label{eq.10}
w(t,x) \ & \leqslant   \int_{\Gamma_{tt+h}} L_\alpha (s,x(s),u^1_\alpha(s),\Psi[u^1_\alpha](s))ds^\alpha  \\&  +m(t+h,x(t+h))+\varepsilon.
\end{split}\end{equation}

Define a new control $${u_\alpha^\star}\in \mathcal{U}(t), {u_\alpha^\star}(s)\equiv
\left\{\begin{array}{ll}
u^1_\alpha(s) & s\in \Omega_{tt+h}\\
 u_\alpha(s) & s\in \Omega_{tT}\setminus\Omega_{tt+h},
\end{array}\right.$$
for each control $u_\alpha\in \mathcal{U}(t+h)$ and then define the strategy ${\Psi}^\star\in\mathcal{B}(t+h), \Psi^\star[u_\alpha](s)\equiv\Psi[{u_\alpha^\star}](s), s\in \Omega_{tT}\setminus\Omega_{tt+h}.$
We find the inequality
\begin{equation}\begin{split}
 \ & m(t+h,x(t+h)) \\ & \leq\max_{u_\alpha\in \mathcal{U}(t+h)}\left\lbrace \int_{\Gamma_{tt+h}} L_\alpha(s,x(s),u_\alpha(s),\Psi^\star[u_\alpha](s))ds^\alpha+g(x(T))\right\rbrace
\end{split}\end{equation}
and so there exists the control $u^2_\alpha\in \mathcal{U}(t+h)$ for which
\begin{equation}\begin{split}\label{eq.11}
 \ & m(t+h, x(t+h))\\&  \leq \int_{\Gamma_{tT}\setminus \Gamma_{tt+h}} L_\alpha(s,x(s),u^2_\alpha(s),\Psi^\star[u^2_\alpha](s))ds^\alpha+g(x(T)) +\varepsilon.
\end{split}\end{equation}

Define a new control
$$u_\alpha\in \mathcal{U}(t), u_\alpha(s)\equiv
\left\{\begin{array}{ll}
u^1_\alpha(s) & s\in \Omega_{tt+h}\\
u^2_\alpha(s) & s\in \Omega_{tT}\setminus\Omega_{tt+h}.
\end{array}\right.$$

Then the inequalities $\eqref{eq.10}$ and $\eqref{eq.11}$ yield
$$ w(t,x)\leq \int_{\Gamma_{tT}} L_\alpha(s,x(s),u_\alpha(s),\Psi[u_\alpha](s))ds^\alpha+g(x(T)) +2\varepsilon,$$
and so $\eqref{eq.9}$ implies the inequality
$$ w(t,x)\leq m(t,x)+3\varepsilon.$$

This inequality and  $m(t,x)\leq w(x,t)+2\varepsilon$ complete the proof.
\end{enumerate}
\end{proof}

\begin{theorem}\textbf{(boundedness and continuity of the values functions)}
The lower, upper value function $m(t,x)$, $M(t,x)$ satisfy the boundedness conditions
$$\vert m(t,x)\vert, \vert M(t,x)\vert\leq D$$
$$\vert m(t,x)-m(\hat{t},\hat{x})\vert, \vert M(t,x)-M(\hat{t},\hat{x})\vert\leq E\,\, \ell (\Gamma_{\hat{t}\,t})+ F\,\Vert x-\hat{x}\Vert,$$
for some constants $D, E, F$ and for all $t, \hat{t}\in \Omega_{0T}, x, \hat{x} \in \mathbb{R}^n.$
\end{theorem}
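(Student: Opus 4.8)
The plan is to prove the two assertions — boundedness and Lipschitz continuity — separately, treating the lower value $m$ in detail since the upper value $M$ is handled by the symmetric argument (interchanging the roles of the two equips). The key tool throughout is the collection of hypotheses placed on the data: the running costs satisfy $|L_\alpha|\le C_\alpha$ and are $C_\alpha$-Lipschitz in the state, the terminal cost satisfies $|g|\le B$ and is $B$-Lipschitz, and the vector fields satisfy $\|X_\alpha\|\le A_\alpha$ together with the $A_\alpha$-Lipschitz estimate in $x$. I would also lean on the definition of $m$ as a min over strategies of a max over controls, using the elementary fact that $|\inf_j a_j-\inf_j b_j|\le\sup_j|a_j-b_j|$ (and likewise for $\sup$), so that bounds and Lipschitz estimates valid uniformly over all strategy/control pairs transfer directly to the value functions.

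\medskip
For boundedness I would start from the definition
\[
J_{t,x}(u(\cdot),v(\cdot))=\int_{\Gamma_{tT}} L_\alpha(s,x(s),u_\alpha(s),v_\alpha(s))\,ds^\alpha+g(x(T)),
\]
and bound each piece. Since $|g|\le B$, the terminal term contributes at most $B$ in absolute value. For the curvilinear integral I would use $|L_\alpha|\le C_\alpha$ to get
\[
\Bigl|\int_{\Gamma_{tT}}L_\alpha\,ds^\alpha\Bigr|\le \sum_\alpha C_\alpha\int_{\Gamma_{tT}}|ds^\alpha|\le \|C\|\,\ell(\Gamma_{tT}),
\]
where $\ell(\Gamma_{tT})$ is the length of the curve, which is controlled by the fixed parallelepiped $\Omega_{0T}$. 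Hence $|J_{t,x}|\le B+\|C\|\,\ell(\Gamma_{0T})=:D$ uniformly in $(t,x)$ and in the controls and strategies, and taking the min–max preserves this bound, giving $|m(t,x)|\le D$ and similarly $|M(t,x)|\le D$.

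\medskip
For the Lipschitz estimate the main work is comparing two trajectories. Fix $(t,x)$ and $(\hat t,\hat x)$ and, for a common control pair, let $x(\cdot)$ and $\hat x(\cdot)$ solve the $m$-flow PDE from $(t,x)$ and $(\hat t,\hat x)$ respectively. The heart of the argument is a Gronwall-type estimate: differentiating $\|x(s)-\hat x(s)\|$ along the curve and invoking the Lipschitz bound $\|X_\alpha(s,x,\cdot)-X_\alpha(s,\hat x,\cdot)\|\le A_\alpha\|x-\hat x\|$ yields an integral inequality whose solution gives $\|x(s)-\hat x(s)\|\le e^{\|A\|\,\ell}\bigl(\|x-\hat x\|+\text{const}\cdot\ell(\Gamma_{\hat t\,t})\bigr)$, so the trajectory separation is controlled by the initial-data separation and the curve-segment length $\ell(\Gamma_{\hat t\,t})$. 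Feeding this into the difference of the two functionals, and using the Lipschitz bounds on $L_\alpha$ and on $g$ plus the uniform bound $\|L_\alpha\|\le C_\alpha$ on the short segment $\Gamma_{\hat t\,t}$, produces an estimate of the form $|J_{t,x}-J_{\hat t,\hat x}|\le E\,\ell(\Gamma_{\hat t\,t})+F\|x-\hat x\|$ with constants $E,F$ depending only on $A,B,C$ and $\ell(\Gamma_{0T})$. Since this holds uniformly over strategies and controls, the min–max contraction property transfers it to $m$ and $M$.

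\medskip
The main obstacle I anticipate is the multitime Gronwall step: in the genuinely multidimensional $(m>1)$ setting the state evolves along a curve $\Gamma$ and the PDE system is over-determined, so one must use the complete integrability conditions (CIC) to guarantee that the curvilinear integral and the reconstructed trajectory are path-independent and well-defined before the one-dimensional Gronwall inequality along $\Gamma$ can be applied. Subtleties also arise when $t$ and $\hat t$ are not comparable in the product order, requiring the connecting segment $\Gamma_{\hat t\,t}$ to be chosen within $\Omega_{0T}$ so that $\ell(\Gamma_{\hat t\,t})$ is meaningful; once path-independence is in hand, however, the remaining estimates reduce to the standard single-time bounds applied along the chosen curve.
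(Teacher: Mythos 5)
Your boundedness argument coincides with the paper's: $|J_{t,x}(u,v)|\le\|C\|\,\ell(\Gamma_{0T})+B=:D$ uniformly over all controls and strategies, and the bound survives the min--max. The genuine gap is in the continuity half, at precisely the step you treat as routine: ``since this holds uniformly over strategies and controls, the min--max contraction property transfers it to $m$ and $M$.'' The inequality $|\inf_j a_j-\inf_j b_j|\le\sup_j|a_j-b_j|$ requires both quantities to be infima over the \emph{same} index set, and here they are not. The outer optimization in $M(t_1,x_1)$ ranges over strategies $\Phi\in\mathcal{A}(t_1)$, which act on controls in $\mathcal{V}(t_1)$ and are non-anticipating relative to the initial multitime $t_1$; the one in $M(t_2,x_2)$ ranges over $\mathcal{A}(t_2)$, a different set (non-anticipativity relative to $t_2$ is a different constraint, and the domains of the maps differ). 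So when $t\ne\hat t$ there is no ``common control pair'' at all, and your uniform payoff estimate has no single index set over which to take the supremum: the contraction lemma cannot be invoked as stated. Supplying this correspondence is exactly the technical core of the paper's proof, which your proposal skips: given $v_\alpha\in\mathcal{V}(t_2)$ the paper builds an extension $\overline{v}_\alpha\in\mathcal{V}(t_1)$ by inserting an arbitrary fixed control on the slab $\Omega_{0t_2}\setminus\Omega_{0t_1}$; given $\Phi\in\mathcal{A}(t_1)$ it defines the restricted strategy $\underline{\Phi}\in\mathcal{A}(t_2)$ by $\underline{\Phi}[v_\alpha]=\Phi[\overline{v}_\alpha]$; it then compares the two values through $\varepsilon$-optimal selections on each side (a $\Phi$ that is $\varepsilon$-optimal for $M(t_1,x_1)$, then a $v_\alpha$ that is $\varepsilon$-optimal against $\underline{\Phi}$ for $M(t_2,x_2)$), obtaining $M(t_1,x_1)-M(t_2,x_2)\le J(\Phi[\overline{v}_\alpha],\overline{v}_\alpha)-J(\underline{\Phi}[v_\alpha],v_\alpha)+2\varepsilon$, with the symmetric coupling for the reverse inequality. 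Only after this construction does your trajectory/payoff comparison become meaningful, because the two payoffs are then evaluated along controls that agree on $\Omega_{0T}\setminus\Omega_{0t_2}$. Your argument as written is valid only in the special case $t=\hat t$, where the index sets genuinely coincide.

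Two further remarks. Once the coupling is in place, your trajectory estimate is actually more careful than the paper's: the paper asserts $\|x_1(s)-x_2(s)\|\le\|A\|\,\ell(\Gamma_{t_1t_2})+\|x_1-x_2\|$ on $t_2\le s\le T$, silently dropping the Gronwall factor $e^{\|A\|\,\ell(\Gamma_{0T})}$ that the Lipschitz dependence of $X_\alpha$ on $x$ forces; your version keeps it, and it is harmless since it is absorbed into the constants $E$ and $F$. Also, your worry about $t$ and $\hat t$ being incomparable in the product order is legitimate, but note that the paper does not resolve it either: both arguments implicitly assume the two initial multitimes can be joined by an increasing curve inside $\Omega_{0T}$.
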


\begin{proof} We prove only the statements for upper value function $M(t,x).$

Since $\vert g(x)\vert\leqslant B, \vert L_\alpha(t,x,u_\alpha,v_\alpha)\vert\leqslant C_\alpha, \alpha=\overline{1,m}$, we find
\begin{equation}\begin{split}
\vert J_{t,x}(u(\cdot),v(\cdot))\vert \ & =\Big\vert \int_{\Gamma_{tT}} L_\alpha (s,x(s),u_\alpha(s),v_\alpha(s))ds^\alpha+g(x(T))\Big\vert \\& \leq \Big\vert \int_{\Gamma_{tT}} L_\alpha (s,x(s),u_\alpha(s),v_\alpha(s))ds^\alpha \Big\vert + \vert g(x(T))\vert \\& \leq \int_{\Gamma_{tT}} \Vert L_\alpha (s,x(s),u_\alpha(s),v_\alpha(s))\Vert \Vert ds^\alpha \Vert +\vert g(x(T))\vert \\& \leq \Vert C \Vert \int_{\Gamma_{tT}}  ds + B= \Vert C \Vert l(\Gamma_{tT}) +B\leq \Vert C \Vert l(\Gamma_{0T}) +B=D \\ & \Longrightarrow \vert M(t,x)\vert\leq D,
\end{split}\end{equation}
for all $u_\alpha(\cdot)\in \mathcal{U}(t),v_\alpha(\cdot)\in \mathcal{V}(t).$

Let $x_1,x_2\in \mathbb{R}^n,\,\, t_1, t_2 \in \Omega_{0T}.$
For $\varepsilon>0$ and the strategy $\Phi\in \mathcal{A}(t_1),$ we have

\begin{equation}\label{eq:1}
M(t_1,x_1)\leq \min_{v_\alpha \in \mathcal{V}(t_1)} J(\Phi[v_\alpha],v_\alpha)+\varepsilon.
\end{equation}

Define the control
$$\overline{v}_\alpha\in \mathcal{V}(t_1),\overline{v}_\alpha(s)\equiv
\left\{\begin{array}{ll}
{v}_\alpha^1(s) & s\in \Omega_{0t_2}\setminus\Omega_{0t_1}\\
{v}_\alpha(s) & s\in \Omega_{0T}\setminus\Omega_{0t_2},
\end{array}\right.$$
for any $v_\alpha \in \mathcal{V}(t_2)$ and some $v_\alpha^1 \in V$ and for each $v_\alpha \in \mathcal{V}(t_2), \underline{\Phi}\in \mathcal{A}(t_2)$ (the restriction of $\Phi$ over $\Omega_{0T}\setminus \Omega_{0t_1})$ by $\underline{\Phi}[v_\alpha]=\Phi[\overline{v}_\alpha], s\in \Omega_{0T}\setminus\Omega_{0t_2}.$

Choose the control $v_\alpha\in \mathcal{V}(t_2)$ so that
\begin{equation}\label{eq:2}
M(t_2,x_2)\geq  J(\underline{\Phi}[v_\alpha],v_\alpha)-\varepsilon.
\end{equation}

By the inequality $\eqref{eq:1},$ we have
\begin{equation}\label{eq:3}
M(t_1,x_1)\leq  J(\Phi[\overline{v}_\alpha],\overline{v}_\alpha)+\varepsilon.
\end{equation}

We know that the (unique, Lipschitz) solution $x(\cdot)$ of the Cauchy problem
$$\left\{\begin{array}{ll}
\frac{\partial x^i}{\partial s^\alpha}(s)=X^i_\alpha(s,x(s),u_\alpha(s),v_\alpha(s))\\
x(t)=x, \quad s\in \Omega_{tT}\subset \mathbb{R}_+^m, x\in \mathbb{R}^n, i=\overline{1,n}, \alpha =\overline{1,m},
\end{array}\right.$$
is the response to the controls $u_\alpha(\cdot), v_\alpha(\cdot)$ for $s\in \Omega_{0T}.$

We choose $x_1(\cdot)$ as solution of the Cauchy problem
$$\left\{\begin{array}{ll}
\frac{\partial x^i_1}{\partial s^\alpha}(s)=X^i_\alpha(s,x_1(s),\Phi[\overline{v}_\alpha](s),\overline{v}_\alpha(s))\\
x_1(t_1)=x_1, \quad s\in \Omega_{0T}\setminus \Omega_{0t_1}.
\end{array}\right.$$
Equivalently, $x_1(\cdot)$ is solution of integral equation
$$x_1(s)= x_1(t_1) + \int_{\Gamma_{t_1s}}X_\alpha(\sigma,x_1(\sigma),\Phi[\overline{v}_\alpha](\sigma),\overline{v}_\alpha(\sigma))d\sigma^\alpha.$$
Take $x_2(\cdot)$ as solution of the Cauchy problem
$$\left\{\begin{array}{ll}
\frac{\partial x^i_2}{\partial s^\alpha}(s)=X^i_\alpha(s,x_2(s),\underline{\Phi}[v_\alpha](s),v_\alpha(s))\\
x_2(t_2)=x_2, \quad s\in \Omega_{0T}\setminus \Omega_{0t_2}.
\end{array}\right.$$
Equivalently, $x_2(\cdot)$ is solution of integral equation
$$x_2(s)= x_2(t_2) + \int_{\Gamma_{t_2s}}X_\alpha(\sigma,x_2(\sigma),\underline{\Phi}[v_\alpha](\sigma),\overline{v}_\alpha(\sigma))d\sigma^\alpha.$$

It follows that
$$\Vert x_1(t_2)-x_1 \Vert = \Vert x_1(t_2)-x_1(t_1)\Vert \leq \Vert A\Vert \,\ell(\Gamma_{t_1t_2}).$$

 Since $v_\alpha=\overline{v}_\alpha$ and $\underline{\Phi}[v_\alpha]=\Phi[\overline{v}_\alpha],$
 for $s\in \Omega_{0T}\setminus\Omega_{0t_2}$, we find the estimation
\begin{equation}\begin{split}
\Vert x_1(s)-x_2(s)\Vert \ &  \leq \Vert  x_1(t_1) - x_2(t_2)\Vert + \Vert \int_{\Gamma_{t_1t_2}}\cdots \Vert\\ & \leq
\Vert A \Vert \ell(\Gamma_{t_1t_2})+ \Vert x_1-x_2\Vert
 ,\,\, \hbox{on}\,\, t_2\leq s\leq T.
\end{split}\end{equation}

Thus the inequalities $\eqref{eq:2}$ and $\eqref{eq:3}$ imply
$$
 M(t_1,x_1)-M(t_2,x_2) \leq  J(\Phi[\overline{v}_\alpha],\overline{v}_\alpha])-J(\underline{\Phi}[v_\alpha],v_\alpha])+2\varepsilon
$$
$$\leq \Big\vert \int_{\Gamma_{t_1t_2}} L_\alpha (s,x_1(s),\Phi[\overline{v}_\alpha](s),\overline{v}_\alpha(s))ds^\alpha$$ $$+\int_{\Gamma_{t_2T}} (L_\alpha (s,x_1(s),\underline{\Phi}[v_\alpha](s),v_\alpha(s))  -L_\alpha (s,x_2(s),\underline{\Phi}[v_\alpha](s),v_\alpha(s)))ds^\alpha$$  $$+g(x_1(T))-g(x_2(T))+2\varepsilon\Big\vert
$$
$$\leq\int_{\Gamma_{t_1t_2}} \vert L_\alpha (s,x_1(s),\Phi[\overline{v}_\alpha](s),\overline{v}_\alpha(s))ds^\alpha\vert$$ $$+\int_{\Gamma_{t_2T}} \vert (L_\alpha (s,x_1(s),\underline{\Phi}[v_\alpha](s),v_\alpha(s))  -L_\alpha (s,x_2(s),\underline{\Phi}[v_\alpha](s),v_\alpha(s)))ds^\alpha\vert$$
$$+\vert g(x_1(T))-g(x_2(T))\vert +2\varepsilon$$
$$\leq \Vert C \Vert \ell(\Gamma_{t_1t_2}) +\Vert C \Vert \ell(\Gamma_{t_2T})\,(\Vert A \Vert \ell(\Gamma_{t_1t_2})+ \Vert x_1-x_2\Vert) +B\, \Vert x_1-x_2\Vert) +2\varepsilon$$
$$\leq \Vert C \Vert \ell(\Gamma_{t_1t_2}) +\Vert C \Vert \ell(\Gamma_{0T})\,(\Vert A \Vert \ell(\Gamma_{t_1t_2})+ \Vert x_1-x_2\Vert) +B\, \Vert x_1-x_2\Vert) +2\varepsilon.$$

Since $\varepsilon$ is arbitrary, we obtain the inequality
\begin{equation}\label{eq:7}
M(t_1,x_1)-M(t_2,x_2)\leq E\,\ell(\Gamma_{t_1t_2}) + F \,\Vert x_1-x_2\Vert.
\end{equation}

Let $\varepsilon>0$ and choose the strategy $\Phi\in \mathcal{A}(t_2)$ such that
\begin{equation}\label{eq:4}
M(t_2,x_2)\leq \min_{v_\alpha \in \mathcal{V}(t_2)} J(\Phi[v_\alpha],v_\alpha)+\varepsilon.
\end{equation}

For each control $v_\alpha \in \mathcal{V}(t_1)$ and $s\in \Omega_{0T}\setminus\Omega_{0t_2},$ define the control $\underline{v}_\alpha\in \mathcal{V}(t_2), \underline{v}_\alpha(s)=v_\alpha(s).$

For some $u^1_\alpha \in U,$ we define the strategy $\overline{\Phi}\in \mathcal{A}(t_1)$
(the restriction of $\Phi$ over $\Omega_{0T}\setminus\Omega_{0t_2}$) by
$$\overline{\Phi}[{v}_\alpha]=
\left\{\begin{array}{ll}
u^1_\alpha & s\in \Omega_{0t_2}\setminus\Omega_{0t_1}\\
\Phi[\underline{v}_\alpha] & s\in \Omega_{0T}\setminus\Omega_{0t_2}.
\end{array}\right.$$

Now choose a control $v_\alpha\in \mathcal{V}(t_1)$ so that
\begin{equation}\label{eq:5}
M(t_1,x_1)\geq  J(\overline{\Phi}[v_\alpha],v_\alpha)-\varepsilon.
\end{equation}

By the inequality $\eqref{eq:4},$ we have
\begin{equation}\label{eq:6}
M(t_2,x_2)\leq  J(\Phi[\underline{v}_\alpha],\underline{v}_\alpha)+\varepsilon.
\end{equation}

We choose $x_1(\cdot)$ as solution of the Cauchy problem (PDE system + initial condition)
$$\left\{\begin{array}{ll}
\frac{\partial x_1^i}{\partial s^\alpha}(s)=X^i_\alpha(s,x_1(s),\overline{\Phi}[v_\alpha],v_\alpha(s)), s\in \Omega_{0T}\setminus\Omega_{0t_1} \\
x_1(t_1)=x_1, \quad s\in \Omega_{0T}\setminus \Omega_{0t_1}
\end{array}\right.$$
and $x_2(\cdot)$ as solution of the Cauchy problem (PDE system + initial condition)
$$\left\{\begin{array}{ll}
\frac{\partial x_2^i}{\partial s^\alpha}(s)=X^i_\alpha(s,x_2(s),\Phi[\underline{v}_\alpha],\underline{v}_\alpha(s)), s\in \Omega_{0T}\setminus\Omega_{0t_2}\\
x_2(t_2)=x_2, \quad s\in \Omega_{0T}\setminus \Omega_{0t_2}.
\end{array}\right.$$
Using the associated integral equations, it follows that
$$\Vert x_1(t_2)-x_1 \Vert = \Vert x_1(t_2)-x_1(t_1)\Vert \leq \Vert A\Vert \,\ell(\Gamma_{t_1t_2}).$$
Also, for $s\in \Omega_{0T}\setminus\Omega_{0t_2}, v_\alpha=\underline{v}_\alpha$ and $\overline{\Phi}[v_\alpha]=\Phi[\underline{v}_\alpha],$ we find
\begin{equation}\begin{split}
\Vert x_1(s)-x_2(s)\Vert \ &  \leq \Vert  x_1(t_1) - x_2(t_2)\Vert + \Vert \int_{\Gamma_{t_1t_2}}\cdots \Vert\\ & \leq
\Vert A \Vert \ell(\Gamma_{t_1t_2})+ \Vert x_1-x_2\Vert,\,\, \hbox{on}\,\, t_2\leq s\leq T.
\end{split}\end{equation}

Thus, the relations $\eqref{eq:5}$ and $\eqref{eq:6}$ imply
$$
M(t_2,x_2)-M(t_1,x_1) =  J(\overline{\Phi}[v_\alpha],v_\alpha])-J(\Phi[\underline{v}_\alpha],\underline{v}_\alpha])+2\varepsilon
  $$
  $$
  =  - \int_{\Gamma_{t_1t_2}} L_\alpha (s,x_1(s),\overline{\Phi}[{v}_\alpha](s),{v}_\alpha(s))ds^\alpha
$$
 $$
+\int_{\Gamma_{t_2T}} (L_\alpha (s,x_1(s),{\Phi}[\underline{v}_\alpha](s),\underline{v}_\alpha(s)) - L_\alpha (s,x_2(s),{\Phi}[\underline{v}_\alpha](s),\underline{v}_\alpha(s)))ds^\alpha
 $$
 $$ +g(x_1(T))-g(x_2(T))+2\varepsilon $$
 $$\leq \Vert C \Vert \ell(\Gamma_{t_1t_2}) +\Vert C \Vert \ell(\Gamma_{0T})\,(\Vert A \Vert \ell(\Gamma_{t_1t_2})+ \Vert x_1-x_2\Vert) +B\, \Vert x_1-x_2\Vert) +2\varepsilon.$$

 Since $\varepsilon$ is arbitrary, we obtain the inequality
\begin{equation}\label{eq:8}
M(t_2,x_2)-M(t_1,x_1)\leq E\,\ell(\Gamma_{t_1t_2}) + F \,\Vert x_1-x_2\Vert.
\end{equation}

By $2.17$ and $2.22$, we proved the continuity of the lower and upper value functions.
\end{proof}

\section{Viscosity solutions of \\multitime (HJIU) PDEs}

\begin{theorem}\textbf{(PDEs for multitime upper value function, resp. multitime lower value function)}

The multitime upper value function $M(t,x)$ and the multitime lower value function $m(t,x)$
are the viscosity solutions of Hamilton-Jacobi-Isaacs-Udri\c ste (HJIU) PDEs:
\begin{itemize}
\item the multitime upper (HJIU) PDEs
$$\frac{\partial M}{\partial t^\alpha}(t,x)+\min_{v_\alpha\in \mathcal{V}} \max_{u_\alpha\in \mathcal{U}} \left\lbrace  \frac{\partial M}{\partial x^i}(t,x) X_\alpha^i(t,x,u_\alpha,v_\alpha)+L_\alpha(t,x,u_\alpha,v_\alpha)\right\rbrace =0,$$
with the terminal condition $M(T,x)=g(x),$
\item the multitime lower (HJIU) PDEs
$$\frac{\partial m}{\partial t^\alpha}(t,x)+\max_{u_\alpha \in \mathcal{U}} \min_{v_\alpha \in \mathcal{V}} \left\lbrace  \frac{\partial m}{\partial x^i}(t,x) X_\alpha^i(t,x,u_\alpha,v_\alpha)+L_\alpha(t,x,u_\alpha,v_\alpha)\right\rbrace =0,$$
with the terminal condition $m(T,x)=g(x).$
\end{itemize}
\end{theorem}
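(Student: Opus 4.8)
The plan is to verify that $M$ satisfies both the viscosity subsolution and the viscosity supersolution inequalities for the stated upper (HJIU) PDE, using the multitime dynamic programming principle of Theorem 2.1; the lower value $m$ is then treated by the symmetric argument with the roles of $\max_u$ and $\min_v$ interchanged. First I would fix the notion of solution: $M$ is a viscosity subsolution if, for every $\phi\in C^1$ such that $M-\phi$ has a local maximum at $(t_0,x_0)$, the left-hand operator of the PDE with $\partial\phi/\partial t^\alpha$ and $\partial\phi/\partial x^i$ in place of the derivatives of $M$ is nonnegative, and a supersolution if that operator is nonpositive at each local minimum of $M-\phi$. The terminal condition $M(T,x)=g(x)$ is immediate, since $\Gamma_{TT}$ degenerates to a point and the curvilinear integral vanishes.

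For the subsolution inequality, I would take $\phi\in C^1$ with $M-\phi$ attaining a local maximum at $(t_0,x_0)$, normalized so that $M(t_0,x_0)=\phi(t_0,x_0)$ and $M\le\phi$ nearby. Parametrize the increasing curve near $t_0$ by $t_0+\tau\theta$, where $\theta=(\theta^\alpha)$ is the positive tangent direction and $\tau\to 0^+$, and let $x(\cdot)$ be the corresponding m-flow response. Applying the upper-value formula of Theorem 2.1 on the increment $\Gamma_{t_0,\,t_0+\tau\theta}$ and bounding $M(t_0+\tau\theta,x(\cdot))$ from above by $\phi(t_0+\tau\theta,x(\cdot))$ gives
$$\phi(t_0,x_0)\ \ge\ \max_{\Phi}\min_{v}\Big\{\int_{\Gamma_{t_0,\,t_0+\tau\theta}}L_\alpha\,ds^\alpha+\phi(t_0+\tau\theta,x(t_0+\tau\theta))\Big\}.$$
Expanding $\phi(t_0+\tau\theta,x(t_0+\tau\theta))-\phi(t_0,x_0)$ by the chain rule along the flow $\partial x^i/\partial s^\alpha=X^i_\alpha$, dividing by $\tau$, and letting $\tau\to 0^+$, the strategy-based min-max over the infinitesimal increment collapses to the pointwise Isaacs operator $\min_{v_\alpha}\max_{u_\alpha}$, which yields the desired subsolution inequality.

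For the supersolution inequality I would run the symmetric argument with $M-\phi$ attaining a local minimum at $(t_0,x_0)$: fix a near-optimal strategy $\Phi$ for the first team on the small increment, use the reverse bound $M\ge\phi$ together with the opposite direction of the dynamic programming estimate, and again divide by $\tau$ and pass to the limit to obtain the reverse inequality. Here the order of $\min$ and $\max$ in the resulting Hamiltonian is dictated by the informational (strategy) structure of the upper value, which in the infinitesimal limit produces exactly the $\min_{v_\alpha}\max_{u_\alpha}$ ordering of the stated PDE.

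The main obstacle is the limit passage of the curvilinear term and of the directional time-derivative. Because $t$ is a multitime and the increment moves along the increasing curve $\Gamma$, I must establish
$$\frac{1}{\tau}\int_{\Gamma_{t_0,\,t_0+\tau\theta}}L_\alpha\,ds^\alpha\longrightarrow L_\alpha\,\theta^\alpha,\qquad \frac{1}{\tau}\big[\phi(t_0+\tau\theta,\cdot)-\phi(t_0,\cdot)\big]\longrightarrow \frac{\partial\phi}{\partial t^\alpha}\,\theta^\alpha,$$
and then argue that the contracted inequality forces the stated equation. This is precisely where the closedness hypothesis $D_\beta L_\alpha=D_\alpha L_\beta$ and the complete integrability conditions $D_\beta X_\alpha=D_\alpha X_\beta$ enter: they guarantee path-independence of the curvilinear cost and consistency of the m-flow, so that the limit is independent of the chosen increasing curve and the scalar PDE (summed over $\alpha$) is well-posed. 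The Lipschitz bounds from Theorem 2.2, together with the uniform continuity and boundedness of $L_\alpha$ and $X_\alpha$, are what force the remainder terms to vanish uniformly as $\tau\to 0^+$.
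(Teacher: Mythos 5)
The first thing to note is that the paper contains no proof of this theorem to compare yours against: immediately after the statement and the remark introducing the Hamiltonian $1$-forms $H^{\pm}_\alpha$, the authors write ``The proof will be given in another paper.'' So your proposal can only be judged on its own terms, and on those terms it has two genuine gaps.

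First, the step where you say ``the strategy-based min-max over the infinitesimal increment collapses to the pointwise Isaacs operator'' is not a routine limit passage --- it is the entire analytic core of the Evans--Souganidis argument that this theorem transplants to the multitime setting. The $\max$ over nonanticipating strategies $\Phi$ and the $\min$ over controls $v_\alpha$ do not commute with the limit $\tau\to 0^+$, and one cannot simply divide the dynamic programming identity by $\tau$ and pass to the limit inside them. In the one-time case this step is carried out by contradiction: one assumes the Hamiltonian inequality fails by some margin $\theta>0$, uses uniform continuity to propagate the failure to a neighborhood, and then constructs (via measurable selection) an explicit control, respectively an explicit nonanticipating strategy, whose insertion into the dynamic programming principle produces a contradiction. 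Your sketch invokes none of this machinery for either the subsolution or the supersolution inequality, so the essential content of the theorem is asserted rather than proved. Second, your limit is taken along the single tangent direction $\theta$ of the increasing curve, which can only yield the contracted scalar relation $\frac{\partial \phi}{\partial t^\alpha}\theta^\alpha + \big(\text{Isaacs term contracted with }\theta\big)=0$, whereas the theorem asserts a system of $m$ separate PDEs, one for each $\alpha$. Since $\min_{v}\max_{u}$ is not additive across $\alpha$, the contracted relation does not decompose into the individual equations; the closedness hypothesis $D_\beta L_\alpha = D_\alpha L_\beta$ and the CIC give path-independence of the payoff, but path-independence alone does not perform this separation. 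The natural repair is to exploit the fact that Theorem 2.1 holds for every increment $h\in\Omega_{0\,T-t}$, and to run your argument with $h=\tau e_\beta$ along each coordinate direction separately, which isolates the $\beta$-th equation; as written, your proposal acknowledges the issue (``argue that the contracted inequality forces the stated equation'') but leaves it unresolved.
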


\begin{remark}
If we introduce the so-called upper and lower Hamiltonian $1$-forms defined respectively by
$$H^+_\alpha(t,x,p)=\min_{v_\alpha\in \mathcal{V}} \max_{u_\alpha \in \mathcal{U}}\lbrace p_i(t) X_\alpha^i(t,x,u_\alpha,v_\alpha)+L_\alpha(t,x,u_\alpha,v_\alpha)\rbrace,$$
$$H^-_\alpha(t,x,p)=\max_{u_\alpha\in \mathcal{U}} \min_{v_\alpha\in \mathcal{V}}\lbrace p_i(t) X_\alpha^i(t,x,u_\alpha,v_\alpha)+L_\alpha(t,x,u_\alpha,v_\alpha)\rbrace,$$
then the multitime (HJIU) PDE systems can be written in the form
$$\frac{\partial M}{\partial t^\alpha}(t,x)+H^+_{\alpha}\left( t,x,\frac{\partial M}{\partial x}(t,x)\right) =0$$ and
$$\frac{\partial m}{\partial t^\alpha}(t,x)+H^-_\alpha\left( t,x,\frac{\partial m}{\partial x}(t,x)\right) =0.$$
\end{remark}

The proof will be given in another paper.

\section{Representation formula of viscosity \\solutions for multitime (HJ) PDEs}

In this section, we want to obtain a representation formula for the viscosity solution
$M(t,x)$ of the multitime (HJ) PDEs system
\begin{equation}
 \frac{\partial M}{\partial t^\alpha}+H_\alpha\left( t,x,\frac{\partial M}{\partial x}(t,x)\right) =0, (t,x)\in \Omega_{0T}\times \mathbb{R}^n,\alpha=\overline{1,m},
\end{equation}
\begin{equation}
M(0,x)=g(x), x\in \mathbb{R}^n \,(\hbox{initial\, condition}),
\end{equation}
where the unique solution $M(t,x)$ satisfies the inequalities
\begin{equation}\label{eq:11}
\left\{\begin{array}{ll}
\vert M(t,x)\vert\leq D\\
\vert M(t,x)-M(\hat{t},\hat{x})\vert\leq E\,\,\ell(\Gamma_{t\hat t})+ F\,\,\Vert x-\hat{x}\Vert,
\end{array}\right.\end{equation}
for some constants $D, E, F$ (for $m=1,$ see also [4]).

Also, we assume that $g:\mathbb{R}^n \rightarrow \mathbb{R}, H_\alpha:\Omega_{0T} \times \mathbb{R}^n\times \mathbb{R}^p\rightarrow \mathbb{R},$ satisfy the inequalities
$$\left\{\begin{array}{ll}
\vert g (x)\vert\leq B\\
\vert g(x)-g (\hat{x})\vert\leq B \Vert x-\hat{x}\Vert
\end{array}\right.$$
and
\begin{equation}\label{eq:8}
\left\{\begin{array}{ll}
\vert H_\alpha(t,x,0)\vert\leq K_\alpha\\
\vert H_\alpha (t,x,p)-H_\alpha(\hat{t},\hat{x},\hat{p})\vert\leq K_\alpha\,\, (\ell(\Gamma_{t\hat t})+\Vert x-\hat{x}\Vert +\Vert p-\hat{p}\Vert).
\end{array}\right. \end{equation}

\textbf{Max-min representation of a Lipschitz function as affine functions} (for $m=1,$ see also [2], [3]).

\begin{lemma}\label{l-2}

For each $\alpha$, let
\begin{equation}\label{eq:10}
\left\{\begin{array}{ll}
U=B(0,1)\subset \mathbb{R}^{n}\\
V=B(0,P)\subset \mathbb{R}^{n}\\
X_\alpha(u_\alpha)=K_\alpha u_\alpha,\, K_\alpha \in \mathbb{R}\\
L_\alpha(t,x,u_\alpha,v_\alpha)=H_\alpha (t,x,v_\alpha)-<K_\alpha u_\alpha,v_\alpha>.
\end{array}\right.
\end{equation}
Let $H_\alpha$ be a Lipschitz 1-form. For some constant $P>0$ and for each $t\in \Omega_{0T},\,\, x \in \mathbb{R}^n,$ we have
$$H_\alpha (t,x,{p})=\max_{v_\alpha \in V}\min_{u_\alpha \in U}\left\lbrace <X_\alpha(u_\alpha),{p}> + L_\alpha(t,x,u_\alpha,v_\alpha)\right\rbrace ,$$
if $\Vert {p}\Vert \leq P$.
\end{lemma}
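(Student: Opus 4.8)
The plan is to substitute the explicit data of the lemma, namely $X_\alpha(u_\alpha)=K_\alpha u_\alpha$ and $L_\alpha(t,x,u_\alpha,v_\alpha)=H_\alpha(t,x,v_\alpha)-\langle K_\alpha u_\alpha,v_\alpha\rangle$ from \eqref{eq:10}, into the bracket and collapse it to a single expression in $H_\alpha$. A direct computation gives
$$\langle X_\alpha(u_\alpha),p\rangle+L_\alpha(t,x,u_\alpha,v_\alpha)=H_\alpha(t,x,v_\alpha)+K_\alpha\langle u_\alpha,p-v_\alpha\rangle,$$
so the right-hand side of the claimed identity becomes $\max_{v_\alpha\in V}\min_{u_\alpha\in U}\{H_\alpha(t,x,v_\alpha)+K_\alpha\langle u_\alpha,p-v_\alpha\rangle\}$, and the entire statement reduces to showing that this quantity equals $H_\alpha(t,x,p)$. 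At the outset I would record that $K_\alpha$ is understood as the Lipschitz constant of $H_\alpha$ appearing in the hypothesis \eqref{eq:8}, so in particular $K_\alpha>0$, and that $P$ is the bound on the admissible momenta.

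Next I would perform the inner minimization, which is elementary because the only $u_\alpha$-dependence is linear over the unit ball $U=B(0,1)$. By Cauchy--Schwarz, $\min_{\|u_\alpha\|\le 1}\langle u_\alpha,w\rangle=-\|w\|$, attained at $u_\alpha=-w/\|w\|$ (and trivially at any $u_\alpha$ when $w=0$). Applying this with $w=p-v_\alpha$ and the positive factor $K_\alpha$ yields
$$\min_{u_\alpha\in U}\{H_\alpha(t,x,v_\alpha)+K_\alpha\langle u_\alpha,p-v_\alpha\rangle\}=H_\alpha(t,x,v_\alpha)-K_\alpha\|p-v_\alpha\|.$$
The remaining task is then to prove $\max_{v_\alpha\in V}\{H_\alpha(t,x,v_\alpha)-K_\alpha\|p-v_\alpha\|\}=H_\alpha(t,x,p)$, which I would split into two inequalities.

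For the lower bound I would use that $\|p\|\le P$ places $p$ inside $V=B(0,P)$, so $v_\alpha=p$ is an admissible choice; it contributes exactly $H_\alpha(t,x,p)-K_\alpha\|p-p\|=H_\alpha(t,x,p)$, forcing the maximum to be at least $H_\alpha(t,x,p)$. For the upper bound I would invoke the Lipschitz hypothesis \eqref{eq:8} in the momentum variable (with $t,x$ fixed), giving $H_\alpha(t,x,v_\alpha)-H_\alpha(t,x,p)\le K_\alpha\|v_\alpha-p\|$, hence $H_\alpha(t,x,v_\alpha)-K_\alpha\|p-v_\alpha\|\le H_\alpha(t,x,p)$ for every $v_\alpha\in V$; taking the supremum over $v_\alpha$ shows the maximum is at most $H_\alpha(t,x,p)$. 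The two estimates together close the argument.

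This is in essence a clean two-line computation rather than a deep theorem, so there is no genuine obstacle to overcome; the only point that deserves care is the \emph{design} behind the construction. The same constant $K_\alpha$ plays two roles simultaneously — as the scale factor in the control-to-state map $X_\alpha(u_\alpha)=K_\alpha u_\alpha$ and as the Lipschitz modulus of $H_\alpha$ — and it is precisely this coincidence that makes the inner infimum $-K_\alpha\|p-v_\alpha\|$ dovetail exactly with the Lipschitz estimate used in the upper bound. The second delicate ingredient is the admissibility condition $\|p\|\le P$: it is exactly what guarantees $p\in V$ so that $v_\alpha=p$ is legal, and without it the lower bound would fail. I would therefore emphasize these two structural facts, since they are what turn an otherwise routine minimax calculation into the stated representation formula.
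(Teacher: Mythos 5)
Your proposal is correct and is essentially the paper's own argument: the paper likewise reduces the claim to $H_\alpha(t,x,p)=\max_{v_\alpha\in V}\{H_\alpha(t,x,v_\alpha)-K_\alpha\Vert p-v_\alpha\Vert\}$ via the Lipschitz bound (with equality at the admissible choice $v_\alpha=p$) and then restores the inner minimum over $\Vert u_\alpha\Vert\le 1$ by Cauchy--Schwarz. You merely spell out the two inequalities and the role of $K_\alpha>0$ and $\Vert p\Vert\le P$ that the paper's two-line proof leaves implicit.
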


\begin{proof}
In view of the assumption $H_\alpha (t,x,v_\alpha)-H_\alpha(t,x,{p})\leq K_\alpha \Vert {p}-v_\alpha\Vert,$
by the Cauchy-Schwarz formula, and by the condition $||u||\leq 1$,
we have for any $x\in \mathbb{R}^n,$
\begin{equation}\begin{split}
H_\alpha (t,x,{p})\ & =\max_{v_\alpha \in V} \left\lbrace H_\alpha(t,x,v_\alpha) - K_\alpha\Vert {p}-v_\alpha\Vert\right\rbrace \\ & =\max_{v_\alpha \in V}\min_{u_\alpha\in U}\left\lbrace H_\alpha(t,x,v_\alpha)\,+\, <K_\alpha u_\alpha,{p}-v_\alpha>\right\rbrace.
\end{split}\end{equation}
\end{proof}

\textbf{Max-min representation of a Lipschitz function as positive homogeneous functions} (for m=1, see also [2],[3]).

\begin{lemma}
Let $H_\alpha$ be a Lipschitz $1$-form which is homogeneous in $p,$ i.e.,
$$H_\alpha(t,x,\lambda p)=\lambda H_\alpha(t,x,p),\,\,\lambda \geq 0.$$
Then there exist compact sets $U, V\subset \mathbb{R}^{2n}$ and
vector fields
$$X_\alpha:[0,T]\times \mathbb{R}^n\times U\times V\rightarrow \mathbb{R}^n$$
satisfying
$$\Vert X_\alpha(x)-X_\alpha(\hat{x})\Vert\leqslant A_\alpha\Vert x-\hat{x}\Vert$$
and such that, for each $\alpha$,
$$H_\alpha (t,x,p)=\max_{v_\alpha \in V}\min_{u_\alpha \in U}\left\lbrace <X_\alpha(t,x,u_\alpha,v_\alpha),p> \right\rbrace ,$$
for all $t\in \Omega_{0T},x\in \mathbb{R}^n,p \in \mathbb{R}^n.$
\end{lemma}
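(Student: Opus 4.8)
The plan is to fix the pair $(t,x)$ and reduce the statement to a purely linear max--min representation of the single function $p\mapsto h(p):=H_\alpha(t,x,p)$, which by the standing hypotheses is $K_\alpha$-Lipschitz in $p$, positively homogeneous of degree one, and satisfies $h(0)=0$. The target is to produce, independently of $(t,x)$, compact control sets $U,V$ and a field $X_\alpha$ with $h(p)=\max_{v\in V}\min_{u\in U}\langle X_\alpha(t,x,u,v),p\rangle$, the $(t,x)$-dependence of $X_\alpha$ entering only through the values of $h$. One route is to recycle the preceding Lemma~\ref{l-2}: it already writes $H_\alpha$ as a max--min of expressions that are \emph{affine} in $p$, and positive homogeneity should let me discard the intercept term $L_\alpha$ by rescaling $p$ to the unit sphere and absorbing the resulting scalar factor into an enlarged control, which is where the doubling of the control dimension to $\mathbb{R}^{2n}$ originates. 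I will instead build the representation from scratch, since this makes the Lipschitz-in-$x$ dependence transparent.

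For the construction, for each direction $e\in S^{n-1}$ I would build a concave, positively homogeneous, degree-one \emph{minorant} $g_e\le h$ that touches $h$ at $e$, namely $g_e(p)=h(e)\,\langle e,p\rangle+M\min(\langle e,p\rangle,0)-K_\alpha\Vert p-\langle e,p\rangle e\Vert$ with $M:=2K_\alpha$. Writing $p=s\,e+p^\perp$, $s=\langle e,p\rangle$, the Lipschitz and homogeneity bounds give $h(p)\ge h(se)-K_\alpha\Vert p^\perp\Vert$; for $s\ge0$ this yields $g_e\le h$ directly, and the term $M\min(s,0)$ is exactly the correction restoring $g_e\le h$ on the antipodal ray $s<0$, the value $M=2K_\alpha$ being forced by $|h(\pm e)|\le K_\alpha$. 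Since $g_e(e)=h(e)$ and $g_e\le h$ globally, one obtains $h=\max_{e\in S^{n-1}}g_e$, the outer maximum (with $V=S^{n-1}$). Each $g_e$ is concave, positively homogeneous and Lipschitz, hence equals $\min_{u}\langle X_\alpha(u,e),p\rangle$ over a compact set of linear majorants; the selection parameter $u$ and the direction $e$ together live in $\mathbb{R}^n\times\mathbb{R}^n=\mathbb{R}^{2n}$, accounting for the stated control spaces.

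Reading off the field and its regularity, the explicit form of $g_e$ shows $X_\alpha(t,x,u,e)=h(e)\,e+(\text{terms in }u\text{ of norm}\le cK_\alpha)$, so the \emph{only} way $(t,x)$ enters is through the scalar $h(e)=H_\alpha(t,x,e)$. The Lipschitz-in-$x$ hypothesis on $H_\alpha$ gives $|H_\alpha(t,x,e)-H_\alpha(t,\hat x,e)|\le K_\alpha\Vert x-\hat x\Vert$, whence $\Vert X_\alpha(t,x,u,e)-X_\alpha(t,\hat x,u,e)\Vert\le K_\alpha\Vert x-\hat x\Vert$, i.e. the required estimate with $A_\alpha=K_\alpha$. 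Substituting the two representations then gives $h(p)=\max_{e}\min_{u}\langle X_\alpha(t,x,u,e),p\rangle$ for all $p\in\mathbb{R}^n$, completing the argument.

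The main obstacle is not the algebra but the fact that $H_\alpha(t,x,\cdot)$ is only Lipschitz and positively homogeneous, \emph{not convex}: it is therefore not the support function of any set, so a pure maximum of linear forms cannot represent it and the inner minimum (equivalently, the passage to $\mathbb{R}^{2n}$) is genuinely needed. Concretely, the hard steps are (a) producing, uniformly in the prescribed direction $e$, a concave minorant that touches $h$ at $e$ yet stays below $h$ on the opposite ray, which is precisely what the $M\min(\langle e,p\rangle,0)$ term repairs, and (b) guaranteeing that the control sets $U,V$ are compact and can be chosen \emph{independently} of $(t,x)$, so that all $(t,x)$-dependence is confined to the single Lipschitz factor $H_\alpha(t,x,e)$ and the game data remain well defined.
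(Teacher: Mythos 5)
Your proof is correct, but it follows a genuinely different route from the paper's. The paper's proof is modular: it applies the preceding Lemma~\ref{l-2} on the unit sphere to get $H_\alpha(t,x,\eta)=\max_{v^1_\alpha}\min_{u^1_\alpha}\lbrace\langle K_\alpha u^1_\alpha,\eta\rangle+L_\alpha(t,x,u^1_\alpha,v^1_\alpha)\rbrace$, extends to general $p$ by homogeneity, and then re-linearizes the scalar remainder $L_\alpha\Vert p\Vert$ through two extra unit-ball controls, via $C_\alpha\Vert p\Vert=\max_{v^2_\alpha}\langle C_\alpha v^2_\alpha,p\rangle$ and $(L_\alpha-C_\alpha)\Vert p\Vert=\min_{u^2_\alpha}(L_\alpha-C_\alpha)\langle u^2_\alpha,p\rangle$ (a minimum because $L_\alpha-C_\alpha\le 0$); this is exactly where $U=V=B(0,1)\times B(0,1)\subset\mathbb{R}^{2n}$ comes from, and the final interchange of $\min_{u^1_\alpha}$ and $\max_{v^2_\alpha}$ is legitimate because the $v^2_\alpha$-term involves neither $u^1_\alpha$ nor $u^2_\alpha$. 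You instead build the representation from scratch: the concave, positively homogeneous minorants $g_e\le h$ touching $h$ along each ray $e\in S^{n-1}$ give $h=\max_{e}g_e$, and each $g_e$ is explicitly a minimum of linear forms, $g_e(p)=\min_{a\in[0,1],\,\Vert b\Vert\le1}\bigl\langle (h(e)+2K_\alpha a)e+K_\alpha(b-\langle b,e\rangle e),\,p\bigr\rangle$, the minimum splitting over the independent parameters $a$ and $b$. Your two key checks are sound: $g_e\le h$ on $\langle e,p\rangle\ge0$ is just the Lipschitz estimate, while on $\langle e,p\rangle<0$ it reduces to $h(e)+h(-e)+M\ge0$, which $M=2K_\alpha$ guarantees since homogeneity gives $h(0)=0$ and hence $\vert h(\pm e)\vert\le K_\alpha$. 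What the paper's route buys is economy: Lemma~\ref{l-2} carries the analytic burden and homogeneity enters only through one scaling step. What your route buys is self-containedness and transparency: the control sets $V=S^{n-1}$ and $U\simeq[0,1]\times B(0,1)$ are visibly compact and $(t,x)$-independent, all $(t,x)$-dependence sits in the single scalar $H_\alpha(t,x,e)$, which yields the Lipschitz bound with $A_\alpha=K_\alpha$ at a glance, and your observation that non-convexity of $H_\alpha$ in $p$ makes the inner minimum unavoidable explains why the statement cannot be simplified to a pure maximum of linear forms. Only cosmetic repairs are needed: your inner control $u=(a,b)$ is $(n+1)$-dimensional rather than $n$-dimensional as you wrote, and both sets should be (trivially) embedded in $\mathbb{R}^{2n}$ to match the statement verbatim; neither point affects the argument.
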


\begin{proof}
Let $u_\alpha=(u^1_\alpha,u^2_\alpha),v_\alpha=(v^1_\alpha,v^2_\alpha)$ ($2n$-dimensional controls) and
\begin{equation}\label{eq:9}
\left\{\begin{array}{ll}
U=V=B(0,1)\times B(0,1)\subset \mathbb{R}^{2n}\\
L_\alpha(t,x,u^1_\alpha,v^1_\alpha)=H_\alpha (t,x,v_\alpha^1)-<K_\alpha u^1_\alpha,v_\alpha^1>\\
X_\alpha(t, x, u_\alpha, v_\alpha)=K_\alpha u^1_\alpha+ C_\alpha v^2_\alpha+ (L_\alpha(t,x,u^1_\alpha,v^1_\alpha)- C_\alpha)u^2_\alpha.\\
\end{array}\right.
\end{equation}
According to Lemma $\eqref{l-2}$ and the assumptions $\eqref{eq:9},$ if $\Vert\eta\Vert =1,$ we have
\begin{equation}\begin{split}
H_\alpha (t,x,\eta)\ &  =\max_{v^1_\alpha \in V^1}\min_{u^1_\alpha\in U^1}\left\lbrace <K_\alpha u^1_\alpha,\eta> + L_\alpha (t,x,u^1_\alpha, v^1_\alpha)\right\rbrace,
\end{split}\end{equation}
for $U^1=V^1=B(0,1)\in \mathbb{R}^n$.

For any $p\neq 0$, we can write
\begin{equation}\begin{split}
H_\alpha(t,x,p)\ & =\Vert p\Vert H_\alpha \left( t,x,\frac{p}{\Vert p\Vert}\right) \\ & =\max_{v^1_\alpha\in V^1}\min_{u^1_\alpha\in U^1}\left\lbrace <K_\alpha u^1_\alpha,p>+L_\alpha (t,x,u^1_\alpha,v^1_\alpha)\Vert p\Vert\right\rbrace .
\end{split}\end{equation}
Then, if we choose $C_\alpha>0$ such that $\vert L_\alpha\vert\leq C_\alpha,$ we find
\begin{equation}\begin{split}
H_\alpha (t,x,p)\ &  =\max_{v^1_\alpha \in V^1}\min_{u^1_\alpha\in U^1}\bigg\{ <K_\alpha u^1_\alpha,p> +C_\alpha\Vert p\Vert +( L_\alpha (t,x,u^1_\alpha, v^1_\alpha)-C_\alpha)\Vert p\Vert\bigg\}\\ & =\max_{v^1_\alpha \in V^1}\min_{u^1_\alpha\in U^1}\max_{v^2_\alpha \in V^1}\min_{u^2_\alpha\in U^1}\bigg\{ <K_\alpha u^1_\alpha,p> +<C_\alpha v^2_\alpha, p>\\ &  +( L_\alpha (t,x,u^1_\alpha, v^1_\alpha)-C_\alpha)< u^2_\alpha,p> \bigg\} \\ & =\max_{v_\alpha \in V}\min_{u_\alpha\in U}\bigg\{ <X_\alpha(t,x,u_\alpha, v_\alpha),p>\bigg\}.
\end{split}\end{equation}
Now, interchanging $\min_{u_\alpha^1\in U^1}$ and $\max_{v_\alpha^2\in V^1}$, the result in Lemma follows.
\end{proof}

We are now in a position to give the main result of this section.
\begin{theorem}
For each $t \in \Omega_{0T}$ and $x \in \mathbb{R}^n,$ the upper value function $M(t,x)$ verifies the equality
\begin{equation}\begin{split}
M(t,x)=\max_{\Phi\in \mathcal{U}(T-t)}\min_{v_\alpha\in V(T-t)}\bigg\{ \ & - \int_{\Gamma_{T-tT}} L_\alpha(T-s,x(s),\Phi[v_\alpha](s),v_\alpha(s))ds^\alpha \\ & +g(x(T))\bigg\} ,
\end{split}\end{equation}
where for each pair of controls $v_\alpha\in V(T-t)$,\,\, $u_\alpha=\Phi[v_\alpha]\in U(T-t),$
the state function $x(\cdot)$ solves the problem
\begin{equation}
\left\{\begin{array}{ll}
\frac{\partial x^i}{\partial s^\alpha}(s)=-F^i_\alpha u_\alpha(s), s\in \Omega_{0T}\setminus \Omega_{0T-t}\\
x(T-t)=x.
\end{array}\right.
\end{equation}
\end{theorem}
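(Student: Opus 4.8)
The plan is to realize the given multitime (HJ) PDE system as the upper (HJIU) PDE system associated with an auxiliary multitime differential game of the type studied in Sections 1--3, and then to identify the asserted right-hand side with the upper value function of that game, concluding by the uniqueness of the viscosity solution. In outline: (1) use Lemma \ref{l-2} to write the Hamiltonian $1$-form $H_\alpha$ in Isaacs max-min form; (2) reverse the multitime so that the initial condition $M(0,x)=g(x)$ becomes the terminal condition of a value-function problem; (3) apply Theorem 3.1 to deduce that the reversed value function is a viscosity solution of the same system; (4) invoke uniqueness under \eqref{eq:11} to force equality.

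First I would apply Lemma \ref{l-2} with the data \eqref{eq:10}, namely $U=B(0,1)$, $V=B(0,P)$, $X_\alpha(u_\alpha)=K_\alpha u_\alpha$ and $L_\alpha(t,x,u_\alpha,v_\alpha)=H_\alpha(t,x,v_\alpha)-\langle K_\alpha u_\alpha,v_\alpha\rangle$, to obtain
$$H_\alpha(t,x,p)=\max_{v_\alpha\in V}\min_{u_\alpha\in U}\left\{\langle X_\alpha(u_\alpha),p\rangle+L_\alpha(t,x,u_\alpha,v_\alpha)\right\},\qquad \|p\|\le P.$$
Choosing $P=F$, the Lipschitz constant of $M$ in $x$ from \eqref{eq:11}, guarantees that this representation is valid at $p=\partial M/\partial x(t,x)$ for every $(t,x)$. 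This exhibits $H_\alpha$ as the Isaacs Hamiltonian of a game whose running cost is $L_\alpha$, whose $m$-flow is $\partial x^i/\partial s^\alpha=X^i_\alpha(u_\alpha)$, and whose admissible controls satisfy the (CIC) of Section 1.

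Next I would pass to the reversed multitime $\tau=T-s$. The present system carries the initial datum $M(0,x)=g(x)$, whereas the value function of Sections 1--3 carries the terminal cost $g(x(T))$; the substitution $\tau=T-s$ interchanges the two and sends $\partial/\partial t^\alpha$ to $-\partial/\partial\tau^\alpha$. This sign flip is precisely what produces the minus sign in the reversed dynamics $\partial x^i/\partial s^\alpha=-F^i_\alpha u_\alpha$ on $\Omega_{0T}\setminus\Omega_{0\,T-t}$ with $x(T-t)=x$, the minus sign in front of the curvilinear integral, the shifted argument $L_\alpha(T-s,\cdot)$, and the integration along $\Gamma_{T-t\,T}$; it also reconciles the order of $\max$ and $\min$ between Lemma \ref{l-2} and the upper value function. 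I would check that the affine time change preserves the closedness $D_\beta L_\alpha=D_\alpha L_\beta$ of the Lagrangian $1$-form and the complete integrability $D_\beta X_\alpha=D_\alpha X_\beta$, so that the curvilinear integral stays path independent and the reversed $m$-flow remains well posed. With these identifications the asserted right-hand side is exactly the upper value function of the reversed game, so by Theorem 3.1 it is a viscosity solution of $\partial M/\partial t^\alpha+H^+_\alpha(t,x,\partial M/\partial x)=0$ with $M(0,x)=g(x)$; since $H^+_\alpha=H_\alpha$ by the first step, it solves the very system in question, and uniqueness then gives the claimed equality.

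The main obstacle I anticipate is the uniqueness step: while the statement in \eqref{eq:11} posits a unique $M$, a rigorous argument requires a comparison principle for the coupled multitime first-order PDE system, the multitime analogue of the Crandall--Lions theorem, ensuring that two viscosity solutions sharing the datum at $t=0$ coincide; the structural Lipschitz bounds on $H_\alpha$ in $t$, $x$ and $p$ are what should drive this estimate. A secondary delicate point is keeping the representation of Lemma \ref{l-2} uniformly legitimate along the whole trajectory, i.e.\ verifying that $\|\partial M/\partial x\|\le P=F$ is maintained so that $H_\alpha$ may be replaced by its max-min form at every point where the dynamic programming identity of Theorem 2.1 is applied.
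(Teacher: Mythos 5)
Your proposal follows essentially the same route as the paper's own proof: use Lemma \ref{l-2} with the data \eqref{eq:10} to put $H_\alpha$ in Isaacs max-min form (valid for $\Vert p\Vert\le P$), reverse multitime via $M^1(t,x)=M(T-t,x)$ so the initial datum $M(0,x)=g(x)$ becomes a terminal cost and the dynamics acquire the minus sign, identify the reversed problem with the upper value function of an auxiliary game through Theorem 3.1, and conclude by uniqueness of the viscosity solution under \eqref{eq:11}. The two obstacles you flag (a multitime comparison principle and the uniform gradient bound $\Vert \partial M/\partial x\Vert\le P$) are genuine, but they are equally present and left unaddressed in the paper, which simply asserts uniqueness and defers the proof of Theorem 3.1 to another paper.
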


\begin{proof}
Let
$$H^1_\alpha(t,x,p)=\max_{v_\alpha\in V}\min_{u_\alpha\in U}\left\lbrace <X_\alpha(u_\alpha),p>+L_\alpha(t,x,u_\alpha,v_\alpha)\right\rbrace,$$
 $U=B(0,1)\subset \mathbb{R}^{pm}, V=B(0,P)\subset \mathbb{R}^{qm}$ and $X^i_\alpha, L_\alpha$
 Lipschitz functions with the assumptions $\eqref{eq:10}.$

Then $H_\alpha(t,x,p)=H^1_\alpha(t,x,p)$ provided $\vert p\vert\leq P.$
Since $M(t,x)$ satisfies $\eqref{eq:11},$ it follows that $M(t,x)$ is also the unique viscosity
solution of the multitime (HJ) PDEs system (for $m=1,$ see also [4])
\begin{equation}
 \frac{\partial M}{\partial t^\alpha}+H^1_\alpha\left( t,x,\frac{\partial M}{\partial x}(t,x)\right) =0, \,\,(t,x)\in \Omega_{0T}\times \mathbb{R}^n,\alpha=\overline{1,m},
\end{equation}
\begin{equation}
M(0,x)=g(x),\,\, x\in \mathbb{R}^n.
\end{equation}
If we take $M^1(t,x)=M(T-t,x),$ one observes that $M^1(t,x)$ is a viscosity solution of this system (for $m=1,$ see also [2])

\begin{equation}
 \frac{\partial M^1}{\partial t^\alpha}+H^+_\alpha\left( t,x,\frac{\partial M^1}{\partial x}(t,x)\right) =0,\,\, (t,x)\in \Omega_{0T}\times \mathbb{R}^n,\alpha=\overline{1,m},
\end{equation}
\begin{equation}
M^1(T,x)=g(x),\,\, x\in \mathbb{R}^n
\end{equation}
and
$$H^+_\alpha(t,x,p)=\max_{v_\alpha\in V}\min_{u_\alpha\in U}\left\lbrace -<X_\alpha(u_\alpha),p>+L_\alpha(T-t,x,u_\alpha,v_\alpha)\right\rbrace.$$
Using the above developments, we obtain
\begin{equation}\begin{split}
M^1(t,x)=M(t,x)=\max_{\Phi\in \mathcal{U}(t)}\min_{v_\alpha\in V(t)}\bigg\{ \ & - \int_{\Gamma_{tT}} L_\alpha(T-s,x(s),\Phi[v_\alpha](s),v_\alpha(s))ds^\alpha \\ & +g(xT))\bigg\} ,
\end{split}\end{equation}
where $x(\cdot)$ is the solution of the Cauchy problem
\begin{equation}
\left\{\begin{array}{ll}
\frac{\partial x^i}{\partial s^\alpha}(s)=-X^i_\alpha(u_\alpha(s))=-F^i_\alpha u_\alpha(s), s\in \Omega_{0T}\setminus \Omega_{0T-t}\\
x(t)=x,
\end{array}\right.
\end{equation}
for the control $u_\alpha(\cdot)=\Phi[v_\alpha].$
\end{proof}

\end{document}